\newtheorem{fact}{Fact}[section]
\newtheorem{lemma}[fact]{Lemma}
\newtheorem{theorem}[fact]{Theorem}
\newtheorem{rremark}[fact]{Remark}
\newenvironment{remark}{\begin{rremark} \rm}{\end{rremark}}
\DeclareMathOperator\Id{Id}
\author{Samantha Moore}
\address{Department of Mathematics, University of North Carolina at Chapel Hill, USA}
\email{scasya@live.unc.edu}
\newcommand\blfootnote[1]{%
  \begingroup
  \renewcommand\thefootnote{}\footnote{#1}%
  \addtocounter{footnote}{-1}%
  \endgroup
}
\title{A Combinatorial Formula for the Bigraded Betti Numbers}
\begin{document}
\blfootnote{2020 \textit{Mathematics Subject Classification.} 55N31, 13C05.}
\blfootnote{This material is based upon work supported
by the National Science Foundation Graduate Research Fellowship under Grant No. 1650116.}

\begin{abstract}
It has been shown that $1$-parameter persistence modules have a very simple classification, namely there is a discrete invariant called a barcode that completely characterizes $1$-parameter persistence modules up to isomorphism. In contrast, Carlsson and Zomorodian showed that $n$-parameter persistence modules have no such ``nice" classification when $n>1$; every discrete invariant is incomplete. Despite their incompleteness, discrete invariants can still provide insight into the properties of multiparameter persistence modules. A well-studied discrete invariant for $2$-parameter persistence modules is the bigraded Betti numbers. Through commutative algebra techniques, it is known that the bigraded Betti numbers of a $2$-parameter persistence module $M$ can be recovered from the barcodes of certain zigzag modules within $M$ via a simple combinatorial formula. We present an alternate proof of this formula that relies only on basic linear algebra.
\end{abstract}

\maketitle


\section{Background}

\subsection{Multiparameter Persistence Modules}

Let $\{e_i\}_i$ denote the standard orthonormal frame of $\mathbb{N}^n$ and fix some field $\mathbb{F}$. An $n$\textbf{-parameter persistence module} $M$ is defined by assigning an $\mathbb{F}$-vector space $M_\alpha$ to each vertex $\alpha$ of the $\mathbb{N}^n$ lattice and a homomorphism $^M\phi_{\alpha}^{\alpha+e_i}:M_\alpha\rightarrow M_{\alpha+e_i}$ to each edge $\alpha\rightarrow \alpha +e_i$ of the lattice such that the resulting diagram commutes. Such modules are often build from data sets in order to study the structure of the date (see, for example, [CZ]). An example of a $2$-parameter persistence module is illustrated in Fig. \ref{fig:FirstExample}. The information $\{\dim(M_\alpha)\}_{\alpha\in\mathbb{N}^n}$ is called the \textbf{dimension vector} of $M$. 

\begin{figure}[h]
    \centering
    \includegraphics[scale=.5]{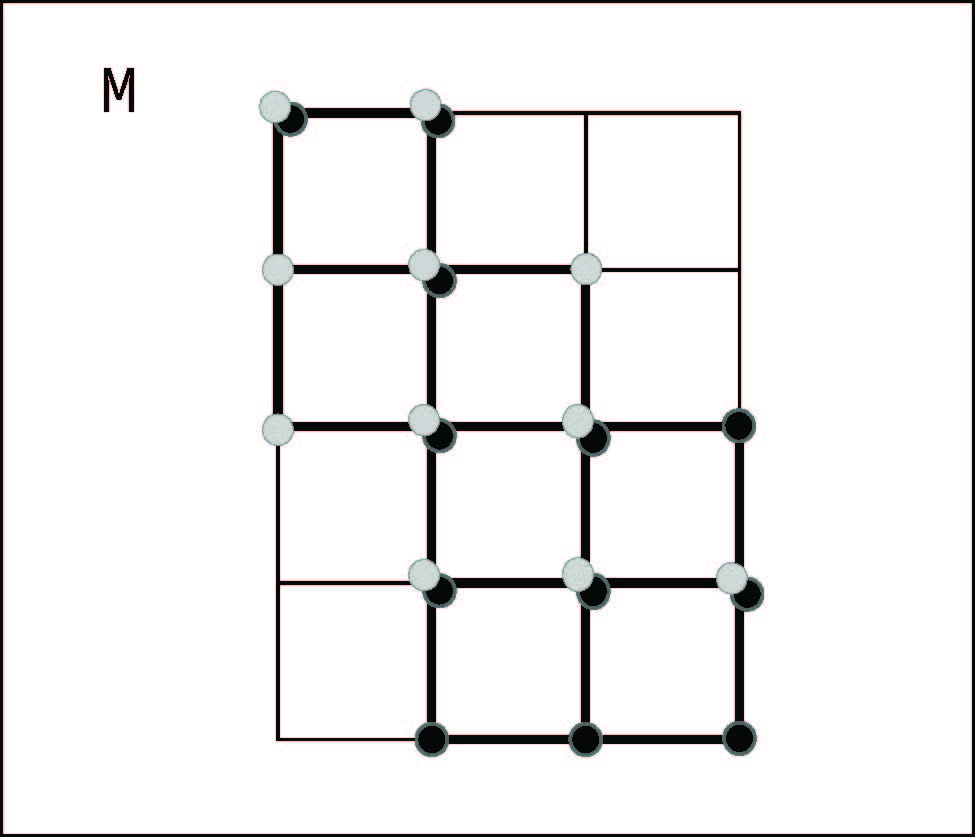}
    \vspace{.2cm}
    \caption{This is an example of a $2$-parameter persistence module $M$. Each colored dot at $\alpha\in\mathbb{N}^2$ represents a basis element of $M_\alpha$. Thick edges represent the identity maps on these colored components, meaning that black basis vectors map to black basis vectors, and similarly for gray basis vectors. Thinner edges represent the zero map.}
    \label{fig:FirstExample}
\end{figure}

We place a partial ordering  on $\mathbb{N}^n$ by defining $\alpha\leq\beta$ if the $i^{th}$ coordinates of such satisfy $\alpha_i\leq \beta_i$ for all $i$. Notice that the commutativity of the $^M\phi_{\alpha}^{\alpha+e_i}$ maps results in a single well-defined linear map from $M_\alpha$ to $M_\beta$ whenever $\alpha\leq\beta$. Denote this linear map by $^M\phi_{\alpha}^{\beta}: M_\alpha\rightarrow M_\beta$. For example, in Fig. \ref{fig:FirstExample} the map from $M_{(1,0)}$ to $M_{(3,2)}$ is $^M\phi_{(1,0)}^{(3,2)}=\Id_{\mathbb{F}}$.

A \textbf{homomorphism} $\mu$ between $n$-parameter persistence modules $M$ and $N$ is a collection of linear maps $\mu_\alpha: M_\alpha\rightarrow N_\alpha$ which commute with the $^N\phi$ and $^M\phi$ maps. That is, $$^N \phi_{\alpha}^{\beta}\circ \mu_\alpha=\mu_\beta\circ\ \! ^M\phi_{\alpha}^{ \beta}$$ for all $\alpha\leq\beta\in\mathbb{N}^n$. The notion of an isomorphism between $n$-parameter persistence modules is thus defined, leading naturally to the question of classification of multiparameter persistence modules up to isomorphism, which we explore in later sections.

\subsection{Correspondence to $\mathbb{N}^n$-graded Modules over $\mathbb{F}[x_1,...,x_n]$}
We restrict our attention to \textbf{finitely generated} $n$\textbf{-parameter persistence modules}, meaning that we require each $M_\alpha$ to be finite dimensional, and for each sequence $\alpha_1\leq\alpha_2\leq\alpha_3\leq\cdot\cdot\cdot$, there must exist an index $k$ such that $^M\phi_{\alpha_i}^{\alpha_{i+1}}$ is an isomorphism whenever $i\geq k$ [CK]. For example, the module in Fig. \ref{fig:FirstExample} is finitely generated. The category of finitely generated $n$-parameter persistence modules is equivalent to the category of finitely generated $\mathbb{N}^n$-graded modules over $\mathbb{F}[x_1,...,x_n]$. The bijection between the objects of these categories is as follows: Each such persistence module $M$ is mapped to the $\mathbb{N}^n$-graded module $M'$ over  $\mathbb{F}[x_1,...,x_n]$ with grading $M'=\bigoplus\limits_{\alpha\in\mathbb{N}^n}M_\alpha,$ and whose $\mathbb{F}[x_1,... ,x_n]$ action is given by $$x_i\cdot v:=\ \!^M\!\phi_{\alpha}^{\alpha+e_i}(v)$$ for all $v\in M_\alpha$ [CZ]. 

From this equivalence of categories, definitions related to  $\mathbb{N}^n$-graded modules over $\mathbb{F}[x_1,...,x_n]$ can be interpreted as definitions for $n$-parameter persistence modules. For example, the \textbf{direct sum} of two $n$-parameter persistence modules $M$ and $N$ is the $n$-parameter persistence module $P=M\bigoplus N$ with vector spaces $$P_\alpha:= M_\alpha \bigoplus N_\alpha$$ and maps $$^P\phi_\alpha^\beta:=\ \! ^M\phi_\alpha^\beta \bigoplus\ \!\! ^N\!\phi_\alpha^\beta.$$ A multiparameter persistence module $M$ is \textbf{indecomposable} if whenever $M=M_1\bigoplus M_2$, either $M_1=0$ or $M_2=0.$ 

\begin{theorem}[Krull-Schmidt-Remak] If $M$ is a $k$-parameter persistence module then $M\cong \bigoplus\limits_i M_i$ where each $M_i$ is indecomposable. Furthermore, this indecomposable decomposition of $M$ is unique up to ordering and isomorphism of the summands.
\end{theorem}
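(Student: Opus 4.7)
The plan is to leverage the equivalence of categories between finitely generated persistence modules and finitely generated $\mathbb{N}^n$-graded modules over $R = \mathbb{F}[x_1,\ldots,x_n]$ established above, and then invoke the classical Krull-Schmidt theorem in the form of Azumaya's theorem: any object decomposing as a finite direct sum of summands with \emph{local} endomorphism rings has an essentially unique such decomposition. The proof thus reduces to establishing, for the image $M$ of our persistence module under the equivalence, that (i) $M$ decomposes as a finite direct sum of indecomposables, and (ii) each indecomposable summand has a local graded endomorphism ring.

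The central technical observation is that for any finitely generated graded $R$-module $M$, the ring $\text{End}_R^{\text{gr}}(M)$ of grade-preserving endomorphisms is a finite-dimensional $\mathbb{F}$-algebra. Indeed, if $\{m_1,\ldots,m_k\}$ is a generating set with each $m_i$ of multidegree $d_i \in \mathbb{N}^n$, then any grade-preserving $\phi \colon M \to M$ is determined by its values $\phi(m_i) \in M_{d_i}$; since each $M_{d_i}$ is finite-dimensional by hypothesis, $\text{End}_R^{\text{gr}}(M)$ embeds into the finite-dimensional $\mathbb{F}$-vector space $\bigoplus_i M_{d_i}$ and so is an Artinian $\mathbb{F}$-algebra.

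Given this, existence of a decomposition follows by writing the identity of $\text{End}_R^{\text{gr}}(M)$ as a sum $1 = e_1 + \cdots + e_r$ of orthogonal primitive idempotents (possible since finite-dimensional algebras are semiperfect), which yields the graded decomposition $M = e_1 M \oplus \cdots \oplus e_r M$. Each summand $e_i M$ is indecomposable because its endomorphism ring is the corner algebra $e_i \text{End}_R^{\text{gr}}(M) e_i$, whose only idempotents are $0$ and $e_i$; a standard fact says that any finite-dimensional $\mathbb{F}$-algebra with only trivial idempotents is local. With (i) and (ii) in hand, Azumaya's theorem delivers the uniqueness claim. The main obstacle is verifying that morphisms of persistence modules correspond precisely to \emph{grade-preserving} homomorphisms of the associated graded modules under the categorical equivalence: without grade-preservation the endomorphism ring would typically be infinite-dimensional, and a direct invocation of Fitting's lemma would fail because finitely generated graded $R$-modules generally do not satisfy graded DCC (for instance, $(x) \supsetneq (x^2) \supsetneq \cdots$ in $\mathbb{F}[x]$).
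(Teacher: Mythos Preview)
The paper does not prove this theorem; it is stated without proof as a classical result attributed to Krull, Schmidt, and Remak, so there is no argument in the paper to compare against.

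Your approach is correct and is the standard modern route to Krull--Schmidt in this setting. The crucial observation is exactly the one you isolate: the ring $\mathrm{End}_R^{\mathrm{gr}}(M)$ of degree-zero endomorphisms is finite-dimensional over $\mathbb{F}$, hence Artinian and semiperfect, so the identity splits into primitive orthogonal idempotents and each corner ring is local. Azumaya's theorem then applies. Your worry about whether persistence-module morphisms correspond to \emph{grade-preserving} homomorphisms is legitimate but easily dispatched: by the paper's definition a morphism is a family $\mu_\alpha\colon M_\alpha\to N_\alpha$, so under the equivalence it is automatically homogeneous of degree zero. One point worth making explicit is that the theorem as stated omits the words ``finitely generated,'' but this hypothesis is the standing assumption of the section and is essential to your argument---without it the endomorphism ring need not be finite-dimensional and the idempotent decomposition need not terminate.
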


It is also natural to define \textbf{free} $n$\textbf{- parameter persistence modules} to be those whose image under the equivalence are free $\mathbb{N}^n$- graded $\mathbb{F}[x_1,...,x_n]$ modules. In particular, let $\mathbb{F}_n(\alpha)$ denote the $n$-parameter persistence module with vector spaces $$\displaystyle (\mathbb{F}_n(\alpha))_\beta= \begin{cases}
\mathbb{F} &  \alpha\leq \beta \in\mathbb{N}^n\\
 0 & \text{otherwise} \\ 
 \end{cases}$$ and maps $$\displaystyle ^{\mathbb{F}_n(\alpha)}\phi_{\beta}^\delta= \begin{cases}
Id_\mathbb{F} & \alpha\leq \beta\leq \delta \\ 
 0 & \text{otherwise.} \\ 
 \end{cases}$$
Free multiparameter persistence modules are the modules of the form $F(S):= \bigoplus\limits_{\alpha\in S}\mathbb{F}_n(\alpha)$, where S is a multiset.


\subsection{Classification of $1$-parameter persistence modules}
 In the one parameter case, persistence modules have a very simple classification; there is a discrete invariant (examined below) that completely characterizes finitely generated $1$-parameter persistence modules up to isomorphism [ZC]. This will be in direct contrast with the $n$-parameter case when $n>1$, whose classification is much more complicated, as we will see in the next section.
 

A \textbf{zigzag persistence module} $M$ is a generalization of a $1$-parameter persistence module [CdS]. Namely, let $[\alpha,\beta]$ be an interval in $\mathbb{N}$. To each $\gamma\in [\alpha,\beta]$ we associate a vector space $M_\gamma$ as well as a linear map $^M\phi_\gamma$ of either the form $^M\phi_\gamma=\!^M\phi_{\gamma}^{\gamma+1}:M_\gamma\rightarrow M_{\gamma+1}$ or the form $^M\phi_\gamma=\!^M\phi_{\gamma+1}^{\gamma}:M_{\gamma+1}\rightarrow M_{\gamma}.$ The Krull-Schmidt-Remak theorem also applies to zigzag persistence modules.

\begin{theorem} [Gabriel]
A zigzag persistence module $M$ is indecomposable  if and only if there is an interval $[\alpha,\beta]$ in $\mathbb{N}$ such that 
\begin{enumerate}
\item $M_\delta=\mathbb{F}$ for all $\delta\in[\alpha,\beta]$,
\item  $^M\! \phi_\delta=Id_\mathbb{F}$ for all $\alpha\leq\delta\leq\beta-1$, and 
\item the vector spaces and maps outside of this interval are all zero. 
\end{enumerate}
\end{theorem}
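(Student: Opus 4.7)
The plan is to prove the two directions separately. The ``if'' direction will be a short computation of the endomorphism ring, while the ``only if'' direction will follow from the stronger structural statement that every zigzag module decomposes as a direct sum of interval modules (combined with Krull--Schmidt--Remak).

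For the ``if'' direction, suppose $M$ is the interval module on $[\alpha,\beta]$ described by conditions (1)--(3). Any endomorphism $\mu$ consists of scalars $\lambda_\gamma \in \mathbb{F}$ at each $\gamma \in [\alpha,\beta]$, since $M_\gamma = \mathbb{F}$. The commutation condition with each identity map ${}^M\phi_\delta = \Id_{\mathbb{F}}$ forces $\lambda_\delta = \lambda_{\delta+1}$, so all the $\lambda_\gamma$ agree. Hence $\mathrm{End}(M) \cong \mathbb{F}$, which has only the idempotents $0$ and $1$. A nontrivial decomposition $M = M_1 \oplus M_2$ would produce a nontrivial idempotent (the projection onto $M_1$), a contradiction; so $M$ is indecomposable.

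For the ``only if'' direction, I would prove by induction on the total dimension $\sum_\gamma \dim M_\gamma$ that every zigzag module on $[\alpha,\beta]$ decomposes as a direct sum of interval modules of the form in (1)--(3). Once this is established, the uniqueness part of Krull--Schmidt--Remak immediately forces any indecomposable zigzag module to be (isomorphic to) one of these interval summands. For the inductive step, I would focus on a single arrow, say $\phi: M_\gamma \to M_{\gamma+1}$ (the reverse orientation being symmetric), and choose complements $M_\gamma = \ker(\phi) \oplus V'$ and $M_{\gamma+1} = \mathrm{im}(\phi) \oplus W'$. The component $W'$ splits off as copies of the one-vertex interval module at $\gamma+1$, reducing the total dimension; analogously, vectors in $\ker(\phi)$ will terminate on the $\gamma$ side of $\phi$; and $\phi$ identifies $V'$ with $\mathrm{im}(\phi)$, allowing the two to participate together in longer interval summands.

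The main obstacle is coordinating these splittings with the neighboring arrows in the zigzag: the complements $V'$, $W'$, and so on must be chosen compatibly with the arrows on the other sides of $M_\gamma$ and $M_{\gamma+1}$, or else the ``summand'' extracted will not be a subrepresentation. I plan to handle this by processing the arrows in order from one end of the indexing interval to the other, at each stage choosing splittings that simultaneously respect the kernel and image structure of the current arrow and the direct-sum decompositions already built up at the adjacent vertex from previous stages; a basis vector at $M_\gamma$ is then propagated as far along the zigzag as possible before being cut off, producing one interval summand per basis vector. Because each extracted summand strictly reduces the total dimension, the induction closes. This careful bookkeeping of compatible bases along the zigzag is the technical heart of the argument and the step I expect to be the main obstacle.
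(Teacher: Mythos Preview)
The paper does not prove this theorem at all: it is stated with attribution to Gabriel, cited as [G], and then immediately used. There is no argument in the paper to compare your proposal against.

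On the merits of your proposal itself: the ``if'' direction via $\mathrm{End}(M)\cong\mathbb{F}$ is correct and complete. For the ``only if'' direction, your overall strategy---prove that every zigzag module is a direct sum of interval modules, then invoke Krull--Schmidt--Remak---is the standard one and is sound. However, what you have written is a plan, not a proof, and you yourself flag the missing step. Two specific points where the sketch is not yet an argument:
\begin{itemize}
\item The sentence ``The component $W'$ splits off as copies of the one-vertex interval module at $\gamma+1$'' is not justified. You chose $W'$ only as a complement to $\mathrm{im}(\phi)$ inside $M_{\gamma+1}$; nothing forces $W'$ to be a subrepresentation with respect to the arrow on the \emph{other} side of $M_{\gamma+1}$. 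Until that compatibility is established, $W'$ does not split off.
\item ``Processing arrows in order'' and ``propagating a basis vector as far as possible'' presupposes that at each vertex one can choose a single basis that is simultaneously adapted to the kernel/image of the incoming arrow and to the kernel/image of the outgoing arrow. That such a basis exists is exactly the content of the theorem, and your proposal asserts rather than proves it.
\end{itemize}
To close the gap you need an actual mechanism: for instance, work at a sink or source vertex of the zigzag (where only one arrow touches the vertex, so no two-sided compatibility is required), split off a simple or an interval summand there, and induct on total dimension; or set up a simultaneous matrix-normal-form argument along the chain. Either route is standard, but one of them must be carried out for the ``only if'' direction to be complete.
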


Combining Gabriel's theorem and the Krull-Schmidt-Remark theorem implies that if $M$ is a zigzag persistence module with finite support, then there is a finite set of intervals $\{[\alpha_i,\beta_i]\}_i$ such that $$M=\bigoplus\limits_i\mathbb{F}[\alpha_i,\beta_i]$$ [G]. This complete discrete invariant is called $M$'s \textbf{indecomposable decomposition} or \textbf{barcode}.  An example of a zigzag persistence module and its indecomposable decomposition is given in Fig. \ref{fig:Zigzag}.

\begin{figure}[h]
    \centering
    \includegraphics[width=6cm]{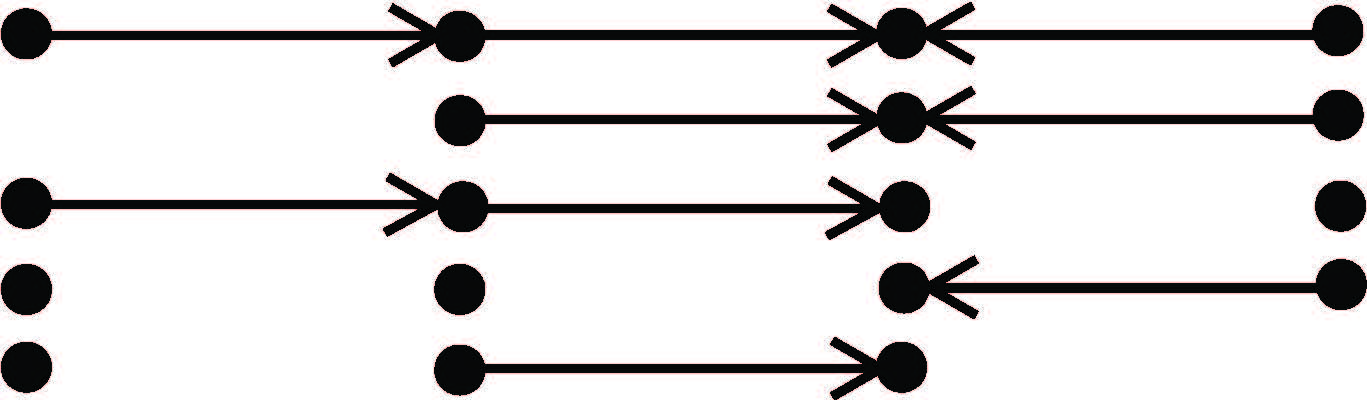}
    \vspace{.2cm}
    \caption{Depicted is a zigzag persistence module $M$. The indecomposable decomposition of $M$ is $M=\mathbb{F}[1,4]\bigoplus\mathbb{F}[2,4]\bigoplus\mathbb{F}[1,3]\bigoplus\mathbb{F}[1,1]\bigoplus\mathbb{F}[2,2]$ $\bigoplus\mathbb{F}[3,4]\bigoplus\mathbb{F}[1,1]\bigoplus\mathbb{F}[2,3]$, where the summands are listed from top to bottom. Notice that $\dim(M_{1})=4=\dim(M_4)$ and $\dim(M_{2})=5=\dim(M_3)$.}
    \label{fig:Zigzag}
\end{figure}

 \subsection{Classification of $n>1$-Parameter Persistence Modules}
When $n>1$, $n$-parameter persistence modules have no such complete discrete classification [CZ]. Based on the notion of cross ratio, Carlsson and Zomorodian found a continuous family of non-isomorphic $2$-parameter persistence modules when $\mathbb{F}=\mathbb{R}$. This family is depicted in Fig. \ref{fig:ContinuousInvariants}, and we will describe it now. Define the linear maps $a,b,c:\mathbb{F}^2\rightarrow\mathbb{F}$ by $a(x,y):=x$, $b(x,y):=y$, and $c(x,y):=x+y$. Let $\lambda_1,\lambda_2\in\mathbb{R}\backslash\{0,1\}$ such that $\lambda_1\neq\lambda_2$. Define $d_i(x,y)=x+\lambda_i y$ and let $M_i$ denote the module illustrated in Fig. \ref{fig:ContinuousInvariants} with $d:=d_i$ for each $i$. 

Suppose there was an isomorphism $\Gamma: M_1\rightarrow M_2$. The fact that $\Gamma$ must commute with the $^{M_1}\phi$ and $^{M_2}\phi$ maps implies each of the following:

\begin{enumerate}
\item{$\Gamma$ will be fully determined by $\Gamma|_{(0,0)}$.}
\item{$\Gamma|_{(i,j)}=\Gamma|_{(0,0)}$ for all $i+j\leq 2$.}
\item{The following kernels must match: $\ker(a\circ\Gamma|_{(0,2)})=\ker(\Gamma|_{(0,3)}\circ a)$. Similar statements can be made for maps $b$ and  $c.$ The commutativity of the $\Gamma$ and $d_i$ maps implies that $\ker(\Gamma|_{(3,0)}\circ d_1)= \ker(d_2\circ \Gamma|_{(2,0)})$.}
\end{enumerate}

\noindent Notice that $\Gamma|_{(i,j)}\in GL_1(\mathbb{R})$ for $i+j=3$. Thus $\Gamma|_{(i,j)}$ is merely a scalar for $i+j=3$. Combining this insight with the second and third statements from the list above implies that $$\ker(f\circ\Gamma|_{(0,0)})=\ker(f)$$ for each map $f\in\{a,b,c\}$. Thus $\Gamma|_{(0,0)}^{-1}$ must preserve each $\ker(f)$. Note that $\ker(a)$ is the $y$-axis, $\ker(b)$ is the $x$-axis, and $\ker(c)$ is the anti-diagonal.  In order to have $\Gamma|_{(0,0)}^{-1}$ preserve these three lines, basic linear algebra implies that $\Gamma|_{(0,0)}=k\cdot\Id_{\mathbb{F}^2}$ for some $k\in\mathbb{R}$. 

Now consider the commutativity of $\Gamma$ and the $d_i$ maps, which yields $\ker(\Gamma|_{(3,0)}\circ d_1)= \ker(d_2\circ \Gamma|_{(2,0)})$. Since $\Gamma|_{(3,0)}=\Gamma|_{(0,0)}$ and $\Gamma|_{(2,0)}$ are scalar maps by above, this implies that $\ker(d_1)=\ker(d_2)$, which is not true. Thus no such isomorphism $\Gamma$ is possible and we have a continuous family of choices (the number $\lambda$ defining map $d$) that yields non-isomorphic multiparameter persistence modules. For any $n>2$, we may embed this family into the $n$-parameter lattice, yielding a continuous family of non-isomorphic $n$-parameter persistence modules.  Any complete invariant of multiparameter persistence modules will thus need a continuous aspect. See [BE] for other infinite families of non-isomorphic $2$- and $3$-parameter persistence modules. The work in [BE] and [EH] characterizes subcategories of $n$-parameter persistence modules which have a complete discrete invariant.
 
\begin{figure}[h]
    \centering
    \includegraphics[scale=.6]{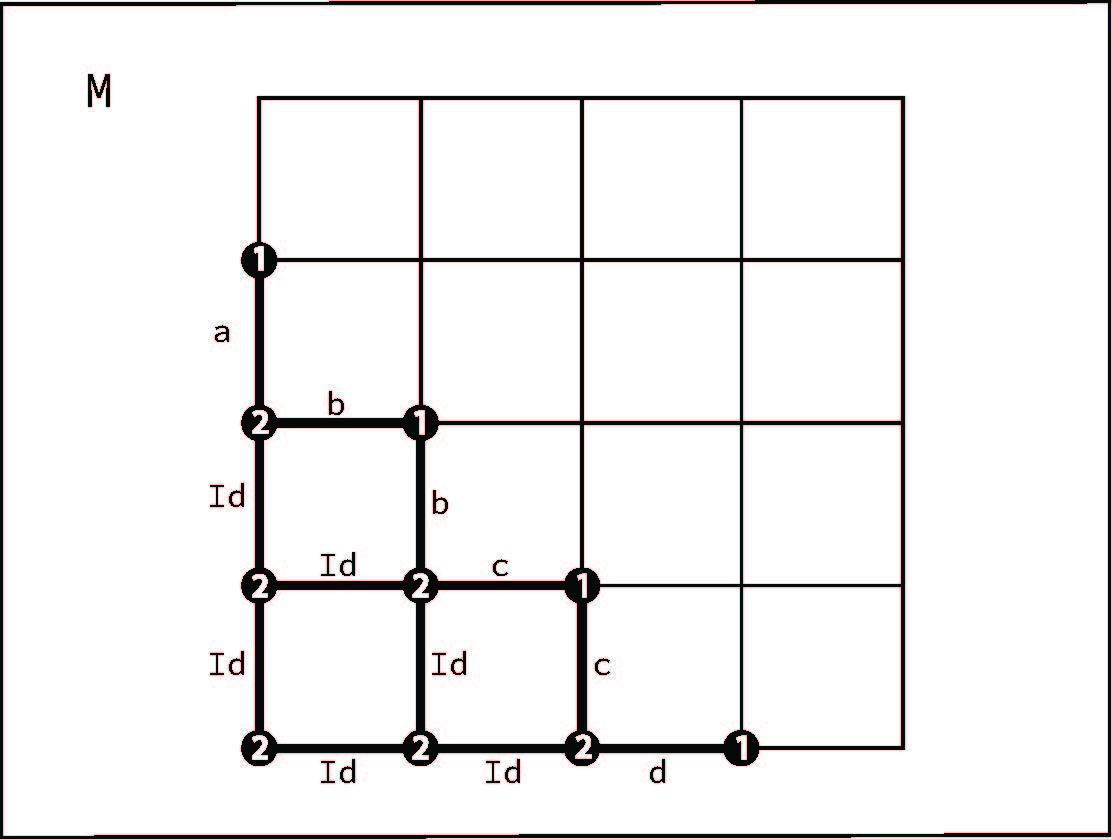}
    \vspace{.2cm}
    \caption{This is (a slight variation of) the continuous family of non-isomorphic $2$-parameter persistence modules explored in [CZ]. The dimensions of each nonzero vector space is shown in white. Thinner edges represent the zero map. We assume that $a(x,y):=x$, $b(x,y):=y$, and $c(x,y):=x+y$ and allow $d$ to vary in order to create non-isomorphic modules.}
    \label{fig:ContinuousInvariants}
\end{figure}

\subsection{The Multigraded Betti Numbers}
Despite their incompleteness, discrete invariants still provide some insight into the properties of multiparameter persistence modules. Such invariants have been explored in various papers, including [CZ, CSk, CL, CSZ, K]. One such invariant is the multigraded Betti numbers $\beta_j^M$, which we now define.

Given a persistence module $M$, $\beta_0^M$ is defined as follows: At each $\alpha\in\mathbb{N}^n$, consider the vector space $$W_\alpha^M:= M_\alpha \Big/ \sum\limits_i im(^M\phi_{\alpha-e_i}^{\alpha}).$$ Then the $0^{th}$ multigraded Betti number of $M$ is the function $\beta_0^M:\mathbb{N}^n\rightarrow \mathbb{N}$  defined by $$\beta_{0}^M(\alpha):=\dim(W_\alpha^M)$$ for all $\alpha\in\mathbb{N}^n$. Define a multiset $\xi_0(M)$ by defining the multiplicity of $\alpha\in\mathbb{N}^n$ in $\xi_0(M)$ to be $\beta_0^M(\alpha)$ [CZ]. Let $F_0(M):=F(\xi_0(M))$ to be the free module associated to $M$. For simplicity, we will often shorten $F_0(M)$ to $F_0$.

Notice that $\xi_0$ (equivalently, $\beta_0^M$) will trivially respect direct sums, namely $\xi_0(M\bigoplus N)=\xi_0(M)+\xi_0(N)$. In the case of free modules $F(S)$, the multiset $\xi_0(F(S))$ will thus be particularly simple; in the base case, $\xi_0(\mathbb{F}_n(\alpha))$ consists of the point $\alpha$ with multiplicity $1$. Because $\xi_0$ respects direct sums, this yields that $\xi_0(F(S))=S$ for every multiset $S$. As such, $\xi_0(F_0(M))=\xi_0(M)$, and thus $W_\alpha^{M}\cong W_\alpha^{F_0(M)}$ for all $\alpha\in\mathbb{N}^n$.

There is be a surjective homomorphism  $\gamma:F_0(M)\rightarrow M$ defined by the following: For each $\alpha\in\mathbb{N}^n$, let $[w_\alpha^1], [w_\alpha^2],..., [w_\alpha^{m_{0,\alpha}}]$ be a basis for $W_\alpha^M$ and $[v_\alpha^1], [v_\alpha^2],..., [v_\alpha^{m_{0,\alpha}}]$ be a basis for $W_\alpha^{F_0(M)}$. It follows that that $\{^M\phi_\beta^\alpha (w_\beta^k) | \beta\leq\alpha, k\leq m_{0,\beta}\}$ forms a basis for $M_\alpha$, and similarly  $\{^{F_0}\phi_\beta^\alpha (v_\beta^k) | \beta\leq\alpha, k\leq m_{0,\beta}\}$ is a basis for $(F_0)_\alpha$. Define $\gamma(v_\alpha^i):=w_\alpha^i$ for all $\alpha, i$. Because $\gamma$ is a homomorphism, by definition it must commute with the $^F\phi$ and $^M\phi$ maps. Thus $$\gamma(w)=\gamma \Big(\sum\limits_{\beta\leq \alpha} \sum\limits_{k\leq m_{0,\beta}} c_\beta^k\ \! ^{F_0}\phi_\beta^\alpha (v_\beta^k) \Big)=\sum\limits_{\beta\leq \alpha} \sum\limits_{k\leq m_{0,\beta}} c_\beta^k\ \! ^M\phi_\beta^\alpha \circ\gamma (v_\beta^k)=\sum\limits_{\beta\leq \alpha} \sum\limits_{k\leq m_{0,\beta}} c_\beta^k\ \! ^M\phi_\beta^\alpha (w_\beta^k).$$ As such, $\gamma(w)$ is defined for all $w\in F_0$.

Notice that $\gamma$ is unique up to composition with an isomorphism of $M$. We denote the kernel of $\gamma$ by $K_0(M)$. This implies that $F_0(M)/K_0(M)\cong M$. Unless $n=1$, $K_0(M)$ may not be a free module, despite being a submodule of $F_0(M)$.  Let $\xi_1(M):=\xi_0(K_0(M))$ and $\beta_1^M:=\beta_0^{K_0}$. Now define $F_1(M)=F_0(K_0(M))=F(\xi_1(M))$ to be the free module associated to $K_0(M)$. As above, we may create a surjection $F_1(M)\twoheadrightarrow K_0(M)$. Denote the kernel of such by $K_1(M)$. Iterate this process, defining new modules $$F_j(M):= F_0(K_j(M)) \text{ and } K_j(M)=K_0(K_{j-1}(M))$$ for all $j$. That is, $F_j(M)$ is the free module associated to $K_{j-1}(M)$ and $K_j(M)$ is the kernel of the surjective homomorphism $F_j(M)\twoheadrightarrow K_{j-1}(M)$. Thus we have maps $$\cdot\cdot\cdot K_2\hookrightarrow F_2 \twoheadrightarrow K_1 \hookrightarrow F_1 \twoheadrightarrow K_0 \hookrightarrow F_0 \twoheadrightarrow M.$$ This gives rise to a minimal length free resolution of $M$,  $\cdot\cdot\cdot F_2 \rightarrow F_1 \rightarrow F_0 \rightarrow M \rightarrow 0$. Hilbert's Syzygy Theorem [H] implies that $F_j=0$ for all $j>n$ whenever $M$ is an $n$-parameter persistence module. For all $j$, define the $j^{th}$ multigraded Betti numbers of $M$ by $$\beta_j^M:=\beta_{0}^{K_{j-1}}=\beta_{0}^{F_{j}}.$$ We again have an equivalent notion, the multiset $$\xi_j(M):=\xi_0(K_{j-1})=\xi_0(F_j)$$ introduced in [CZ]. Let $m_{j,\alpha}:=\beta_j^M(\alpha)$. That is, $m_{j,\alpha}$ denotes the multiplicity of $\alpha$ in $\xi_j(M)$. Notice that Hilbert's Syzygy Theorem implies that $\beta_j^M=0$ for $j>n$. An example of the concepts $F_j(M), K_j(M),$ and $\xi_j(M)$ is shown in Fig. \ref{fig:Example}.

\begin{figure}
    \centering
    \includegraphics[scale=.45]{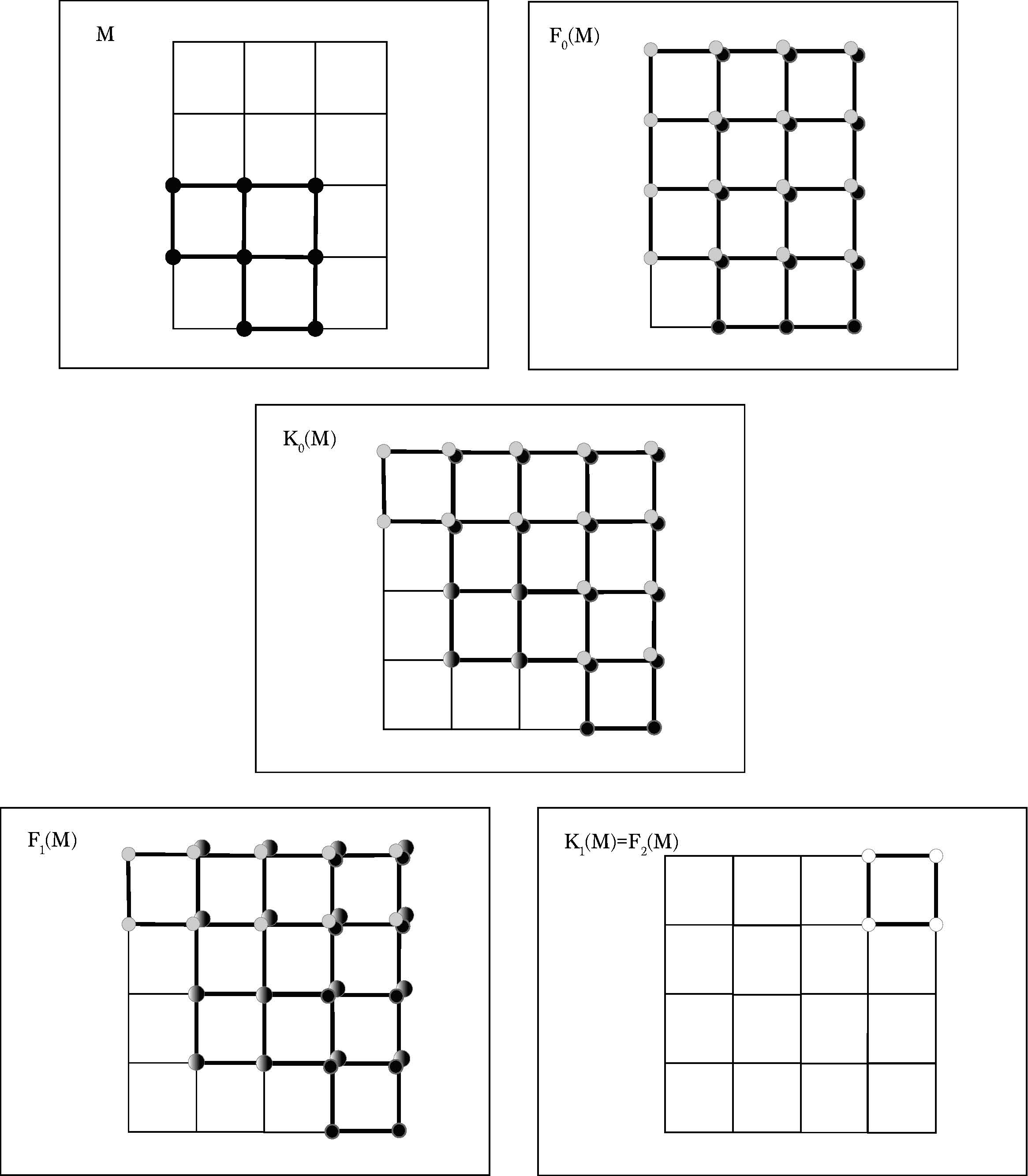}
    \vspace{.2cm}
    \caption{This is an example of a multiparameter persistence module $M$ and its related modules $F_j(M), K_j(M)$ and the related discrete invariants $\xi_j(M)$. In each module, each dot at $\alpha\in\mathbb{N}^2$ represents a basis element of the $\alpha$-vector space. Thick edges represent the identity map on components (i.e. gray basis elements map to gray basis elements, black basis elements map to black basis elements, etc.), while the thinner edges represent the zero map. The coloration and maps of $K_j(M)$ are induced from those of $F_j(M)$. The gradient dots in $(K_0)_\alpha$ represent the vector $(1,-1)\in (F_0)_\alpha$. The white dots in $(K_1)_\alpha$ represent the vector $(1,-1,-1)\in (F_1)_\alpha$ where the ordered basis of $(F_1)_\alpha$ is the black basis vector, the gray basis vector, then the gradient basis vector. Notice that $K_i=F_j= 0$ for $i>1, j>2$. We have $\xi_0(M)=\{(1,0), (0,1)\}$, $\xi_1(M)=\{(3,0), (1,1), (1,3), (0,4)\}$, and $\xi_2(M)=\{(3,3)\}$. For $j>2$, $\xi_j(M)=\emptyset.$ }
    \label{fig:Example}
\end{figure}

\subsection{Properties of $F_j(M)$ and $K_j(M)$}

Fix $j\in[1,n]$. As justified in the previous section, let $[v_\mu^1], [v_\mu^2],..., [v_\mu^{m_{j,\mu}}]$ be a basis for $W_\mu^{F_j(M)}$ and $v_\mu^1, v_\mu^2,..., v_\mu^{m_{j,\mu}}\in F_j(M)_\mu$ be representatives of these classes. Recall that $\{^{F_j}\phi_\mu^\alpha (v_\mu^k) | \mu\leq\alpha, k\leq m_{j,\mu}\}$ is a basis for $(F_j)_\alpha$.  We will abuse notation to write this more concisely, namely writing \begin{equation}F_j(M)_\alpha=\Big\{\!\sum\limits_{\mu\leq\alpha}\sum\limits_{k\leq m_{j,\mu}}\!\!\!c_\mu^k v_\mu^k\ |\ c_\mu^k\in \mathbb{F}\ \forall\ k\Big\}\end{equation} where  $v_\mu^k$ really means $^{F_j}\phi_\mu^\alpha (v_\mu^k)$. By the definition of free multiparameter persistence modules, we have that \begin{equation}\!^{F_j}\phi_{\alpha}^{\beta}\Big(\!\sum\limits_{\mu\leq\alpha}\sum\limits_{k\leq m_{i,\mu}}\!\!\!c_\mu^k v_\mu^k\Big)=\!\sum\limits_{\mu\leq\alpha}\sum\limits_{k\leq m_{j,\mu}}\!\!\!c_\mu^k v_\mu^k\in (F_j)_\beta \end{equation} since each $^{F_j}\phi$ maps act identically on each of $F_j$'s summands. From this observation, we obtain the following three properties of $F_j$ and $K_j\leq F_j$:

\begin{adjustwidth}{30pt}{30pt}

\noindent\textbf{[Property 1]} $^{F_j}\phi_\alpha^\beta$ is injective for all $\alpha\leq\beta$. In particular, $(F_j)_\alpha\cong\ \!^{F_j}\!\phi_{\alpha}^{\beta}((F_j)_\alpha)\leq (F_j)_{\beta}$. Let $(F_j)_\alpha^\beta:=\ ^{F_j}\!\phi_{\alpha}^{\beta}((F_j)_\alpha)$. 

\vspace{.3cm}

\noindent\textbf{[Property 2]}  Because $(K_j)_\alpha\leq (F_j)_\alpha$, Property 1 also implies that $(K_j)_\alpha\cong\ ^{F_j}\phi_\alpha^\beta ((K_j)_\alpha)\leq (F_j)_\beta$. Let $(K_j)_\alpha^\beta:=\ ^{F_j}\phi_\alpha^\beta ((K_j)_\alpha)$. Both of the identifications $(F_j)_\alpha^\beta, (K_j)_\alpha^\beta\leq (F_j)_\beta$ are coodinate-wise identifications by equation (2).

\vspace{.3cm}

\noindent\textbf{[Property 3]} $^{K_j}\phi_\alpha^\beta=\ \! ^{F_j}\phi_\alpha^\beta|_{(K_j)_\alpha}$ is an injective map for all $\alpha\leq\beta$ since $^{F_j}\phi_\alpha^\beta$ is. 
\vspace{.3cm}
\end{adjustwidth}

\noindent These three properties will be used repeatedly throughout the proof of our main result in Section 2.2.

\begin{lemma} Consider any $2$-parameter persistence module $M$. Then $[(K_j)_{\alpha+e_1}^{\alpha+e_1+e_2}\cap (K_j)_{\alpha+e_2}^{\alpha+e_1+e_2}]\leq (F_j)_\alpha^{\alpha+e_1+e_2}$. \end{lemma}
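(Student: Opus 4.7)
The plan is to take an arbitrary $w$ in the intersection, express it as a formal linear combination of the canonical basis of $(F_j)_{\alpha+e_1+e_2}$, and show that the support of this expansion is forced to lie at coordinates $\mu\leq\alpha$, which is exactly the definition of $(F_j)_\alpha^{\alpha+e_1+e_2}$.

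First I would unpack membership in each factor of the intersection. By Property 2, $w$ is realized as $^{F_j}\phi_{\alpha+e_1}^{\alpha+e_1+e_2}(w_1)$ for some $w_1\in (K_j)_{\alpha+e_1}\leq (F_j)_{\alpha+e_1}$ and simultaneously as $^{F_j}\phi_{\alpha+e_2}^{\alpha+e_1+e_2}(w_2)$ for some $w_2\in (K_j)_{\alpha+e_2}\leq (F_j)_{\alpha+e_2}$. Using equation (1) to write $w_1=\sum_{\mu\leq\alpha+e_1}\sum_k c_\mu^k v_\mu^k$ and $w_2=\sum_{\mu\leq\alpha+e_2}\sum_k d_\mu^k v_\mu^k$, the coordinate-wise formula in equation (2) gives two equal expressions for $w$ inside $(F_j)_{\alpha+e_1+e_2}$:
$$w=\sum_{\mu\leq\alpha+e_1}\sum_k c_\mu^k v_\mu^k=\sum_{\mu\leq\alpha+e_2}\sum_k d_\mu^k v_\mu^k.$$

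Second, since $\{v_\mu^k : \mu\leq\alpha+e_1+e_2,\ k\leq m_{j,\mu}\}$ is a basis of $(F_j)_{\alpha+e_1+e_2}$ (Property 1 applied to the free module), these two expansions must agree coefficient-by-coefficient. In particular, $c_\mu^k=d_\mu^k$ for every $\mu$ and $k$, and this common coefficient can be nonzero only if $\mu\leq\alpha+e_1$ and $\mu\leq\alpha+e_2$ simultaneously. Working in $\mathbb{N}^2$, this forces $\mu_1\leq\alpha_1$ and $\mu_2\leq\alpha_2$, i.e., $\mu\leq\alpha$. Hence $w$ lies in the span of $\{v_\mu^k : \mu\leq\alpha\}$ inside $(F_j)_{\alpha+e_1+e_2}$, which is precisely $(F_j)_\alpha^{\alpha+e_1+e_2}$.

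The one subtlety worth flagging is that the conclusion places $w$ inside a subspace of $F_j$, not of $K_j$ — there is no claim that $w$ descends to an element of $(K_j)_\alpha$. Beyond that, the whole argument hinges on the coordinate-wise structure encoded in equation (2) and the elementary identity $\{\mu\in\mathbb{N}^2:\mu\leq\alpha+e_1\}\cap\{\mu\in\mathbb{N}^2:\mu\leq\alpha+e_2\}=\{\mu\in\mathbb{N}^2:\mu\leq\alpha\}$, so I do not anticipate a real obstacle once Properties 1 and 2 are invoked; the lemma is essentially a bookkeeping consequence of freeness together with the specific geometry of $\mathbb{N}^2$ (which is why the statement is restricted to the $2$-parameter setting).
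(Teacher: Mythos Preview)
Your proof is correct and follows essentially the same route as the paper: expand $w$ in the canonical basis of $(F_j)_{\alpha+e_1+e_2}$ via equations (1)--(2), invoke uniqueness of coefficients to force $c_\mu^k=0$ unless $\mu\leq\alpha+e_1$ and $\mu\leq\alpha+e_2$, and then use $\{\mu\leq\alpha+e_1\}\cap\{\mu\leq\alpha+e_2\}=\{\mu\leq\alpha\}$ to conclude. The only difference is cosmetic---you name two coefficient families $c_\mu^k,d_\mu^k$ and equate them, whereas the paper works with a single family from the start---and your closing remarks about $w$ landing in $F_j$ rather than $K_j$ and the role of the $\mathbb{N}^2$ geometry are accurate.
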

 
\begin{proof} Notice that such a statement makes sense, as $(K_j)_{\alpha+e_1}^{\alpha+e_1+e_2}, (K_j)_{\alpha+e_2}^{\alpha+e_1+e_2}$, and $(F_j)_\alpha^{\alpha+e_1+e_2}$ are each subspaces of $(F_j)_{\alpha+e_1+e_2}$. Let $v\in I_{j,\alpha}:=[(K_j)_{\alpha+e_1}^{\alpha+e_1+e_2}\cap (K_j)_{\alpha+e_2}^{\alpha+e_1+e_2}]$. By eq. (1), $v\in (K_j)_{\alpha+e_i}^{\alpha+e_1+e_2}\subset (F_j)_{\alpha+e_i}^{\alpha+e_1+e_2}$ implies that $v$ can be written as \begin{equation}v=\sum\limits_{\mu\leq\alpha+e_i}\sum\limits_{k\leq m_{j,\mu}}\!\!\!c_\mu^k v_\mu^k\in (F_j)_{\alpha+e_1+e_2}\end{equation} for some coefficients $c_\mu^k \in \mathbb{F}$. Furthermore, because this must be true for $i=1,2$, by uniqueness it must be the case that $c_\mu^k=0$ whenever $\mu\nleq\alpha+e_i$ for $i\in 1,2$. Thus Eq. (3) reduces to $$v=\sum\limits_{\mu\leq\alpha}\sum\limits_{k\leq m_{j,\mu}}\!\!\!c_{\mu}^k v_\mu^k,$$ which is an element of $(F_j)_\alpha^{\alpha+e_1+e_2}$ by eq. (2). \end{proof}

\section{A Combinatorial Formula for the Bigraded Betti Numbers}

We defined $n$-parameter persistence modules using the lattice $\mathbb{N}^n$, however we can trivially extend to the $\mathbb{Z}^n$ lattice. Namely, define $M_\beta=0$ for all $\beta\in\mathbb{Z}^n\backslash\mathbb{N}^n$ and let $^M\phi_\alpha^\beta$ be the zero map whenever $\alpha\in\mathbb{Z}^n\backslash\mathbb{N}^n$. This will not impact any of the properties of $M$ that we care about (such as $\xi_j(M)$), and will simplify the explanations needed throughout the next sections.

For $\alpha\in\mathbb{N}^n$, let the \textbf{into-$\alpha$} \textbf{frame} be the restriction of $M$ to the path in the $\mathbb{Z}^2$ lattice whose vertices are $\{\alpha, \alpha-e_1, \alpha-e_2, \cdot\cdot\cdot, \alpha- e_n\}$ and edges are $\{\alpha-e_i\rightarrow\alpha\}_i$. Similarly, let the \textbf{$\alpha$-outward frame} be the restriction of $M$ to the path with vertices $\{\alpha, \alpha+e_1, \alpha+e_2, \cdot\cdot\cdot, \alpha+ e_n\}$ and edges $\{\alpha\rightarrow\alpha+e_i\}_i$. 



\begin{theorem} Let $M$ be a finitely presented $2$-parameter persistence module. For each $\alpha\in\mathbb{N}^n$, let $z_\alpha^M$ denote the multiplicity of $\mathbb{F}[\alpha,\alpha]$ in the barcode of the $\alpha$-outward frame of $M$ and let $y_\alpha^M$ denote the multiplicity of $\mathbb{F}[\alpha,\alpha]$ in the barcode of the into-$\alpha$ frame of $M$. Then for all $\alpha\in\mathbb{N}^2$, 

\begin{equation}\beta_{j}^M(\alpha)= \begin{cases} 
      y_\alpha^M & j=0 \\
       y_{\alpha}^M-\dim(M_{\alpha})+\dim(M_{\alpha-e_1})+\dim(M_{\alpha-e_2})-\dim(M_{\alpha-e_1-e_2})+z^M_{\alpha-e_1-e_2} & j=1 \\
      z_{\alpha-e_1-e_2}^M & j=2.
   \end{cases}\end{equation}
\end{theorem}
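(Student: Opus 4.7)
The plan is to establish the three cases in the order $j=0$, $j=2$, $j=1$, with the last case following from an Euler-characteristic identity on the graded pieces of the minimal free resolution. The $j=0$ case is a definition chase: the into-$\alpha$ frame is the zigzag $M_{\alpha-e_1}\to M_\alpha\leftarrow M_{\alpha-e_2}$, whose Gabriel decomposition has
$$y_\alpha^M=\dim\!\Big(M_\alpha\big/\bigl(\operatorname{im}({}^M\phi_{\alpha-e_1}^\alpha)+\operatorname{im}({}^M\phi_{\alpha-e_2}^\alpha)\bigr)\Big)=\dim(W_\alpha^M)=\beta_0^M(\alpha)$$
copies of $\mathbb{F}[\alpha,\alpha]$. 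The analogous reading of the $(\alpha-e_1-e_2)$-outward frame identifies $z_{\alpha-e_1-e_2}^M$ with $\dim\bigl(\ker({}^M\phi_{\alpha-e_1-e_2}^{\alpha-e_1})\cap\ker({}^M\phi_{\alpha-e_1-e_2}^{\alpha-e_2})\bigr)$, which is the quantity I must match with $\beta_2^M(\alpha)$.

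The heart of the argument is the $j=2$ case, where I construct an explicit linear isomorphism
$$\Phi:\ker({}^M\phi_{\alpha-e_1-e_2}^{\alpha-e_1})\cap\ker({}^M\phi_{\alpha-e_1-e_2}^{\alpha-e_2})\longrightarrow W_\alpha^{K_1}=(K_1)_\alpha\big/\bigl((K_1)_{\alpha-e_1}^\alpha+(K_1)_{\alpha-e_2}^\alpha\bigr).$$
Given $v$ in the intersection of kernels, I lift to $\tilde v\in(F_0)_{\alpha-e_1-e_2}$, push $\tilde v$ into $(F_0)_{\alpha-e_i}$ (which by the kernel hypothesis lands in $(K_0)_{\alpha-e_i}$), lift each image to $a_i\in(F_1)_{\alpha-e_i}$ via the surjection $\pi:F_1\twoheadrightarrow K_0$, and set $\Phi(v):=[{}^{F_1}\phi_{\alpha-e_1}^\alpha(a_1)-{}^{F_1}\phi_{\alpha-e_2}^\alpha(a_2)]$; commutativity forces the bracketed element $b$ to lie in $(K_1)_\alpha$. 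For the inverse $\Psi$, given $b\in(K_1)_\alpha$, expand it in the $F_1$-basis: the condition $\pi(b)=0$, combined with the linear independence of the classes $[u_\alpha^k]$ in $W_\alpha^{K_0}$, forces every coefficient attached to a $\mu=\alpha$ generator to vanish, so $b$ splits as $b_1+b_2$ with $b_i\in(F_1)_{\alpha-e_i}^\alpha$; then $\pi(b_1)=-\pi(b_2)\in(K_0)_{\alpha-e_1}^\alpha\cap(K_0)_{\alpha-e_2}^\alpha$, which by Lemma 1.5 (for $j=0$, shifted) lies in $(F_0)_{\alpha-e_1-e_2}^\alpha$ and by Property 1 pulls back uniquely to some $w\in(F_0)_{\alpha-e_1-e_2}$; I set $\Psi([b]):=\gamma(w)$. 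The main obstacle is verifying well-definedness: tracking that changes of lift for $\tilde v, a_1, a_2$ on the $\Phi$ side and changes of splitting $b=b_1+b_2$ on the $\Psi$ side perturb the outputs only by elements of $(K_1)_{\alpha-e_1}^\alpha+(K_1)_{\alpha-e_2}^\alpha$ and of $\ker\gamma$ respectively, together with checking $\gamma(w)\in\ker\cap\ker$ by chasing ${}^{F_0}\phi_{\alpha-e_1-e_2}^\alpha(w)=\pi(b_1)\in(K_0)_{\alpha-e_1}^\alpha$ back through Property 1. Lemma 1.5 and the injectivity of the $F_j$-maps carry essentially all the structural weight.

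For $j=1$, exactness of $0\to F_2\to F_1\to F_0\to M\to 0$ at each graded piece $\alpha$ yields
$$\dim(M_\alpha)=\sum_{\mu\leq\alpha}\bigl(\beta_0^M(\mu)-\beta_1^M(\mu)+\beta_2^M(\mu)\bigr),$$
and applying the finite-difference operator $\Delta_1\Delta_2$ (Möbius inversion on the $\mathbb{N}^2$ lattice) gives
$$\beta_0^M(\alpha)-\beta_1^M(\alpha)+\beta_2^M(\alpha)=\dim(M_\alpha)-\dim(M_{\alpha-e_1})-\dim(M_{\alpha-e_2})+\dim(M_{\alpha-e_1-e_2}).$$
Solving for $\beta_1^M(\alpha)$ and substituting the known values $\beta_0^M(\alpha)=y_\alpha^M$ and $\beta_2^M(\alpha)=z_{\alpha-e_1-e_2}^M$ from the two preceding cases reproduces exactly the middle line of the theorem.
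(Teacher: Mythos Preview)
Your argument is correct, but it is organized differently from the paper's. After the common $j=0$ step, the paper attacks $j=1$ directly: it writes $m_{1,\alpha+e_1+e_2}$ as an alternating sum of $\dim((K_0)_\beta)$'s plus $\dim(I_{0,\alpha})$, and the single key computation is the identification $I_{0,\alpha}/(K_0)_\alpha^{\alpha+e_1+e_2}\cong\{w\in M_\alpha:\ {}^M\phi_\alpha^{\alpha+e_i}(w)=0\}$ via the isomorphism $F_0/K_0\cong M$; this is a one-layer chase using only Lemma~1.3 and Properties~1--3. The $j=2$ case then falls out by applying the already-proved $j=1$ formula to $K_0$ in place of $M$ and noting $z_\alpha^{K_0}=0$ because the $K_0$-maps are injective. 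You invert this order: you prove $j=2$ first by building an explicit isomorphism $W_\alpha^{K_1}\cong\ker\cap\ker$ through a two-layer chase (lift through $F_0$, then through $F_1$), and then obtain $j=1$ from the graded Euler-characteristic identity $\beta_0-\beta_1+\beta_2=\Delta_1\Delta_2\dim(M_\bullet)$, which is exactly the Hilbert relation the paper records later as a remark. Your $j=1$ step is slicker than the paper's, but you pay for it in the $j=2$ step, which is a noticeably longer diagram chase than the paper's one-layer computation; the paper's ``iterate the $j=1$ formula on $K_0$'' trick avoids ever touching $F_1$ or $K_1$ directly. Both routes lean on the same lemma (that $(K_j)_{\alpha+e_1}^{\alpha+e_1+e_2}\cap(K_j)_{\alpha+e_2}^{\alpha+e_1+e_2}\subseteq(F_j)_\alpha^{\alpha+e_1+e_2}$) and on the injectivity of the $F_j$-maps.
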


\begin{remark} Theorem 2.1 is known and has previously been proven using techniques from commutative algebra; however, many researchers in persistent homology are not well-versed in commutative algebra. Our proof may thus be more accessible to these researchers, as it will require only basic linear algebra. A summary of the commutative algebra proof is as follows: consider $M$ as an $\mathbb{N}^2$ graded module over $\mathbb{F}[x_1,x_2]$. The bigraded Betti numbers may be equivalently defined using tensor products of $\mathbb{N}^2$ graded modules over $\mathbb{F}[x_1,x_2]$. The formula in Theorem 2.1 is found by tensoring $M$ with the Koszul complex on $x_1$ and $x_2$. See [P] for further details.
\end{remark}

 \begin{proof}
 When $j=0$, the result is trivial by the definition of $\beta_0^M$. Now we wish to determine $\beta_1^M$, or equivalently $\xi_1(M)$. 
Let $\alpha+e_1+e_2\in\mathbb{N}^2$. By definition, we have
\begin{equation}
\begin{aligned}
m_{1,\alpha+e_1+e_2}&=\dim((K_0)_{\alpha+e_1+e_2})-\dim((K_0)_{\alpha+e_1}^{\alpha+e_1+e_2}+ (K_0)_{\alpha+e_2}^{\alpha+e_1+e_2})\\
 &=\dim((K_0)_{\alpha+e_1+e_2})-\dim((K_0)_{\alpha+e_1}^{\alpha+e_1+e_2})-\dim((K_0)_{\alpha+e_2}^{\alpha+e_1+e_2})+\dim(I_{0,\alpha})\\
\end{aligned}
\end{equation}
where $I_{0,\alpha}=[(K_0)_{\alpha+e_1}^{\alpha+e_1+e_2}\cap (K_0)_{\alpha+e_2}^{\alpha+e_1+e_2}]$. Recall that $(K_0)_{\alpha+e_i}^{\alpha+e_1+e_2}\cong (K_0)_{\alpha+e_i}$ by Property 2. Thus the dimension of $(K_0)_{\alpha+e_i}^{\alpha+e_1+e_2}$ is known. As such, we simply need to know the dimension of $I_{0,\alpha}$.

Recall that $I_{0,\alpha} \leq (F_0)_\alpha^{\alpha+e_1+e_2}$ by lemma 1.1. In fact, $$(K_0)_\alpha^{\alpha+e_1+e_2}\leq I_{0,\alpha} \leq (F_0)_\alpha^{\alpha+e_1+e_2}$$ as follows: Suppose $v\in (K_0)_\alpha^{\alpha+e_1+e_2}$. Then there exists $w\in (K_0)_\alpha$ such that $v=\ \! ^{K_0}\phi_{\alpha}^{\alpha+e_1+e_2}(w)={^{K_0}\phi_{\alpha}^{\alpha+e_i}\circ\!^{K_0}\phi_{\alpha+e_i}^{\alpha+e_1+e_2}}(w)$, where the first equality comes from the definition of $K_\alpha^{\alpha+e_1+e_2}$, and the second from the required commutativity of the maps within a multiparameter persistence module. Thus $v\in im(^{K_0}\phi_{\alpha+e_i}^{\alpha+e_1+e_2})=(K_0)_{\alpha+e_i}^{\alpha+e_1+e_2}$ for each $i$.

Because $(F_0)_\alpha^{\alpha+e_1+e_2}\cong (F_0)_\alpha$ (Property 1), we can identify $I_{0,\alpha}$ with a subspace $I_{0,\alpha}^\alpha\leq (F_0)_\alpha$. More specifically, tracing through the identifications yields that \begin{equation}I_{0,\alpha}^\alpha=\{v|\ \! ^{F_0}\phi_\alpha^{\alpha+e_i}(v)\in (K_0)_{\alpha+e_i}\ \forall\ i\}\leq (F_0)_\alpha. \end{equation} The above identification $(K_0)_\alpha^{\alpha+e_1+e_2}\leq I_{0,\alpha} \leq (F_0)_\alpha^{\alpha+e_1+e_2}$ can then be rewritten as the isomorphic identifications $$(K_0)_\alpha^{\alpha}\leq I_{0,\alpha}^\alpha \leq (F_0)_\alpha.$$  Now consider $I_{0,\alpha}^\alpha/ (K_0)_\alpha^\alpha\cong I_{0,\alpha} / (K_0)_\alpha^{\alpha+e_1+e_2}$. We claim that $I_{0,\alpha}^\alpha/ (K_0)_\alpha^\alpha\cong\{w\in M_\alpha | ^M\phi_{\alpha}^{\alpha+e_i}(w)=0\ \forall\ i\}$ via the isomorphism $\Gamma:F_0/K_0\rightarrow M$. Notice that $\Gamma$ is the unique homomorphism such that $\Gamma \circ q=\gamma$, where $q:F_0\rightarrow F_0/K_0$ is the quotient map and $\gamma$ is the surjection $F_0\twoheadrightarrow M$. 

We first show that $\Gamma(I_{0,\alpha}^\alpha / (K_0)_\alpha^\alpha)\subseteq \{w\in M_\alpha | ^M\phi_{\alpha}^{\alpha+e_i}(w)=0\ \forall\ i\}$. Let $v$ be a representative of some nonzero class $[v]\in I_{0,\alpha}^\alpha/ (K_0)_\alpha^\alpha\subseteq (F_0)_\alpha / (K_0)_\alpha^\alpha$. Then $\Gamma ([v])=w\neq 0$ since $\Gamma$ is an isomorphism. Furthermore, Eq. (6) implies that $$\gamma_{\alpha+e_i}\circ\ \! ^{F_0}\phi_{\alpha}^{\alpha+e_i}(v)=0.$$ Because the $\gamma, ^{F_0}\phi,$ and $^M\phi$ maps must commute, this implies that $$^M\phi_{\alpha}^{\alpha+e_i}(w)=\ \! ^M\phi_{\alpha}^{\alpha+e_i}\circ \gamma_\alpha (v)=\gamma_{\alpha+e_i}\circ\ \! ^{F_0}\phi_{\alpha}^{\alpha+e_i}(v)=0$$ for $i=1,2$. Thus $\Gamma(I_{0,\alpha}^\alpha)\subseteq \{w\in M_\alpha | ^M\phi_{\alpha}^{\alpha+e_i}(w)=0\ \forall\ i\}$.

Now we show that $I_{0,\alpha}^\alpha / (K_0)_\alpha^\alpha\subseteq \Gamma^{-1}(\{w\in M_\alpha | ^M\phi_{\alpha}^{\alpha+e_i}(w)=0\ \forall\ i\})$. Let $0\neq w\in M_\alpha$ such that $^M\phi_{\alpha}^{\alpha+e_i}(w)=0$ for $i=1,2$. Let $$\Gamma^{-1}(w)=:[v]\in (F_0)_\alpha / (K_0)_\alpha^\alpha,$$ and let $v\in (F_0)_\alpha$ be a representative of the class $[v]$. By commutativity of the $\gamma, ^{F_0}\phi,$ and $^M\phi$ maps, we have $$\gamma_{\alpha+e_i}\circ\ \! ^{F_0}\phi_{\alpha}^{\alpha+e_i}(v)=\ \! ^M\phi_{\alpha}^{\alpha+e_i}\circ\gamma_{\alpha}(v)=\ \! ^M\phi_{\alpha}^{\alpha+e_i}(w)=0.$$ In particular, $^{F_0}\phi_{\alpha}^{\alpha+e_i}(v)\in (K_0)_{\alpha+e_i}$ for each $i$, implying  $v\in I_{0,\alpha}^\alpha$ by Eq. (6). Thus $[v]\in I_{0,\alpha}^\alpha / (K_0)_\alpha^\alpha$, as desired. 

As such, we have shown that $$I_{0,\alpha} / (K_0)_\alpha\cong I_{0,\alpha}^\alpha/ (K_0)_\alpha^\alpha\cong\{w\in M_\alpha | ^M\phi_{\alpha}^{\alpha+e_i}(w)=0\ \forall\ i\}.$$ Thus $$\dim(I_{0,\alpha} / (K_0)_{\alpha}^{\alpha+e_1+e_2})=\dim(I_{0,\alpha}^\alpha/ (K_0)_\alpha^\alpha)=\dim(\{w\in M_\alpha | ^M\phi_{\alpha}^{\alpha+e_i}(w)=0\ \forall\ i\}),$$ which trivially equals $z^M_\alpha$. Then we may write $\dim(I_{0,\alpha})=\dim((K_0)_\alpha)+z^M_\alpha$. Combining with equations (4) and (5) yields that the multiplicity of $\alpha+e_1+e_2$ in $\xi_0(K_0)$ is 
\begin{equation}\begin{aligned} m_{1,\alpha+e_1+e_2}&=\dim((K_0)_{\alpha+e_1+e_2})-\dim((K_0)_{\alpha+e_1})-\dim((K_0)_{\alpha+e_2})+\dim((K_0)_\alpha)+z^M_\alpha\\
& =m_{0,\alpha+e_1+e_2}-\dim(M_{\alpha+e_1+e_2})+\dim(M_{\alpha+e_1})+\dim(M_{\alpha+e_2})-\dim(M_\alpha)+z^M_\alpha, \end{aligned}\end{equation} proving Theorem 2.1 for the $j=1$ case.

To find $\xi_2(M)=\xi_1(K_0)$, we will apply equation (7) to $K_0$. Notice that the $\alpha$-outward frames of $K_0$ are fully determined (up to isomorphism) by the dimension vector of $K_0$ since each map $^{K_0}\phi_{\alpha}^{\alpha+e_i}$ is injective. As such, $z_\alpha^{K_0}=0$ for all $\alpha\in\mathbb{N}^n$. Rewriting equation (7) in terms of $K_0$ (instead of $M$) yields that, for $\alpha\in\mathbb{N}^2$,

\begin{equation*}\begin{aligned} m_{2,\alpha+e_1+e_2}&=m_{1,\alpha+e_1+e_2}-\dim((K_0)_{\alpha+e_1+e_2})+\dim((K_0)_{\alpha+e_1})+\dim((K_0)_{\alpha+e_2})-\dim((K_0)_\alpha) \\
& =z^M_\alpha, \end{aligned}\end{equation*}
where the second equality follows from replacing $m_{1,\alpha+e_1+e_2}$ by equation (7).

\end{proof}

\begin{remark} Note that Theorem 2.1 implies that the into-$\alpha$ and $\alpha$-outward frames determine the bigraded Betti numbers of $M$. The converse of this is not true; For example, consider the modules in Fig. \ref{fig:Counterexample}.

\begin{figure}[h]
    \centering
    \includegraphics[scale=.5]{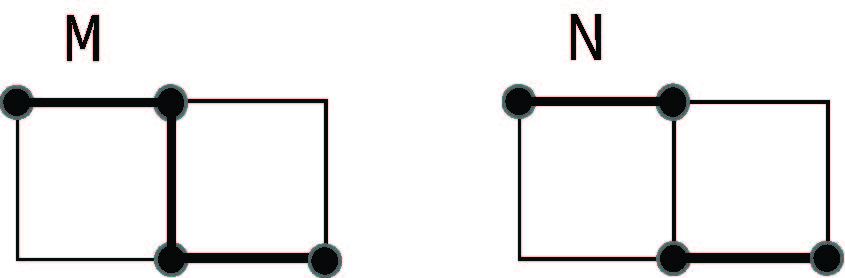}
    \vspace{.2cm}
    \caption{Modules $M$ and $N$ both have $\xi_0=\{(1,0), (0,1)\}$ and $\xi_1=\{(0,2), (1,1), (2,1), (3,0)\}$. However, these modules clearly have distinct into-$(0,1)$ frames. Thus, though the into-$\alpha$ and $\alpha$-outward frames determine the multigraded Betti numbers, the converse is not true.}
    \label{fig:Counterexample}
\end{figure}
\end{remark}

\begin{remark}
   The formulas in Theorem 2.1 indicate a close relationship between the dimension vector of $M$ and the multisets $\xi_j(M)$ when $M$ is a $2$-parameter persistence module. This relationship can be generalized to $n$-parameter persistence modules as follows: by the definitions of the modules $F_j$ and $K_j$, we have
$\dim(M_\alpha)+\dim((K_0)_\alpha)=\dim((F_0)_\alpha)$ and $\dim((K_j)_\alpha)+\dim((K_{j-1})_\alpha)=\dim((F_j)_\alpha)$ for all $j, \alpha$. Rearranging these allows us to write \begin{equation}\dim(M_\alpha)=\sum\limits_{j}\dim((F_{2j})_\alpha)-\sum\limits_{j}\dim((F_{2j+1})_\alpha)= \sum\limits_{\mu\leq\alpha} \sum\limits_j m_{2j,\mu}-\sum\limits_{\mu\leq\alpha} \sum\limits_j m_{2j+1,\mu}\end{equation} for all $\alpha$, where the second inequality comes from applying equation (1). Equation (8) implies that

\begin{equation*}
\dim(M_{\alpha})-\sum\limits_i \dim(M_{\alpha-e_i})+\sum\limits_{i< j} \dim(M_{\alpha-e_i-e_j})+\cdot\cdot\cdot \pm \dim(M_{\alpha-e_1-e_2-\cdot\cdot\cdot -e_n})=\sum\limits_j m_{2j,\alpha}-\sum\limits_j m_{2j+1,\alpha}.
\end{equation*} This result is well known and was first introduced by Hilbert [H].

\end{remark}

\end{document}